\documentclass[a4paper,12pt]{article}
\usepackage{amsfonts}
\usepackage{amssymb,amsmath}
\usepackage{amsthm}
\usepackage{epsfig}                  
\usepackage{epstopdf}                
\usepackage{graphicx}
\usepackage{multirow}
\usepackage{mathrsfs}
\usepackage{comment}
\usepackage{bm}
\usepackage{booktabs}
\usepackage{enumitem}
\usepackage{listings}
\usepackage{color}
\usepackage[title]{appendix}
\usepackage{float}
\usepackage{algorithm}
\usepackage{algpseudocode}
\usepackage{fancyhdr}
\usepackage{setspace}
\usepackage{cases} 
\usepackage{chngcntr}
\usepackage{graphicx}
\usepackage{subcaption}
\usepackage{rotating}

\counterwithin{algorithm}{section} 

\usepackage{caption}
\DeclareMathOperator{\Fix}{Fix}
\DeclareMathOperator*{\argmin}{arg\,min}

\newtheorem{theorem}{Theorem}[section]

\newtheorem{lemma}{Lemma}[section]

\newtheorem{remark}{Remark}[section]

\newtheorem{example}{Example}[section]

\begin{document}

\title{\textbf{Convergence rate of the Halpern iterations
with possibly distinct anchor and initial guess}}

\author{
{\sc Jianing He$^{1}${\thanks{Corresponding author. email: hejianing@tjcu.edu.cn }}}\,\, and \,\,
{\sc Qiao-Li Dong$^{2}${\thanks{Corresponding author. email: dongql@lsec.cc.ac.cn}}}\\
{\footnotesize $^1$School of Information Engineering, Tianjin University of Commerce, Tianjin 300134, China}\\
{\footnotesize $^2$College of Science, Civil Aviation University of China, Tianjin 300300, China}\\
}
\date{}

\maketitle

\begin{abstract}
All existing convergence rate estimates of Halpern iterations are established only for   the case where the anchor coincides with the initial guess. This paper aims to investigate the convergence rate of  general Halpern iterations,  where the anchor and the initial guess may not necessarily be the same.  We present tight convergence rate estimates for both predetermined and adaptive anchoring parameters.
   These results generalize   existing related work.
\end{abstract}

\noindent{\bf Keywords:} Halpern iteration, fixed point, convergence rate, nonexpansive mapping.

\noindent{\bf 2020 AMS Subject Classification:} Primary 47J26, 47J25; secondary 47H09, 65J15.

\section{Introduction }
The calculation of fixed points for nonlinear operators, especially nonexpansive mappings, has always been an interesting topic due to its theoretical significance and practical value. The Halpern iteration is one of the most commonly used algorithms for finding fixed points of nonexpansive mappings.
\vskip 1mm
Let $\mathcal{H}$ be a real Hilbert space with  inner product $\langle\cdot,\cdot\rangle$ and norm $\|\cdot\|$ respectively and let $T: \mathcal{H}\rightarrow \mathcal{H}$ be a nonexpansive mapping. We use $\Fix(T)$ to denote the set of fixed points of $T$, that is, $\Fix(T)=\{x\in \mathcal{H}\, |\, x=Tx\}$.
The Halpern iteration   was first proposed by Halpern \cite{Halpern1967} in 1967 in  a Hilbert space. It generates,
 with an initial guess $x^0\in \mathcal{H}$ arbitrarily chosen,  a sequence $\{x^k\}_{k=0}^\infty$ by the iteration process:
 \begin{equation}\label{Halpern-1}
 x^k=\lambda_k u+(1-\lambda_k)Tx^{k-1},\qquad k=1,2,\ldots,
 \end{equation}
where $u$ is a fixed point in $\mathcal{H}$, referred to as anchor, and the parameters $\{\lambda_k\}_{k=1}^\infty$ are in $(0,1)$,
which will be referred to as anchoring parameters. Some researchers developed the sufficient  conditions on anchoring parameters for guaranteeing the strong convergence of Halpern iteration \cite{Halpern1967,Lio77,Wittmann1992,Rei94,Xu2002}.  He et al. \cite{He2019} first studied the adaptive selections of optimal anchoring parameters for the Halpern iteration.

Estimating the convergence rate of an algorithm is a very important topic. To estimate the convergence rate of the Halpern iteration,  several researchers have studied the properties of the displacement $\|x^k-Tx^k\|$, see \cite{He2019} and \cite{LL2007,KU2011,KU2012}.
 For more details, the reader is referred to the survey article \cite{Lop10}.

\vskip 1mm

Recently, there have been two important progresses in estimating the convergence rate of the Halpern iteration. The first one is about the Halpern iteration with $u=x^0$ and $\lambda_k=\frac{1}{k+1}$.  Lieder \cite{Lieder2021} presented
 the tight convergence rate estimate
\begin{equation}\label{rate-L}
\|x^k-Tx^k\|\leq \frac{2}{k+1}\|x^0-x^*\|,\quad k\geq 1,
\end{equation}
where  $x^*$ is an arbitrary fixed point of $T$.
The second one is about the  Halpern iteration:
\begin{equation}\label{Algorithm-He et al.}
x^k=\frac{1}{\varphi_k+1}x^0+\frac{\varphi_k}{\varphi_k+1}Tx^{k-1},
\end{equation}
with $u=x^0$ and
  \begin{equation}\label{eq3.1}
 \varphi_k:=\frac{2\langle x^{k-1}-Tx^{k-1}, x^0-x^{k-1}\rangle}{\|x^{k-1}-Tx^{k-1}\|^2}+1.
 \end{equation}
He et al.  \cite{He-Xu-Dong} provided the tight convergence rate estimate
\begin{equation}\label{rate}
\|x^k-Tx^k\|\leq \frac{2}{\varphi_k+1}\|x^0-x^*\|,\quad k\geq 1,
\end{equation}
 where  $x^*$ is an arbitrary fixed point of $T$. In general, (\ref{rate}) is  an improvement of
 (\ref{rate-L}) owing to $\varphi_k\geq k$ for all $k\geq 1$ (see \cite[Lemma 3.1(i)]{He-Xu-Dong}).
 These convergence rate estimates motivate the research on applying Halpern iteration to  accelerate iterative  methods in machine learning (generative adversarial networks (GANs), in particular) \cite{Dia20,Yoo21}
and structural optimizations (see, e.g., \cite{Lv-Dong2026,QiX21,Sun2025,ZZW2025}).
\vskip 1mm

We note that  the existing convergence rate estimates of Halpern iterations are all restricted to the case where the anchor $u$ and initial guess $x^0$ are the same. In this paper, we aim to investigate the convergence rate of  general Halpern iterations,  where $u$ and $x^0$ may be distinct.
We provide the tight convergence rate estimates for both predetermined and adaptive anchoring parameters, which  generalize  (\ref{rate-L}) and (\ref{rate}), respectively.

\vskip 1mm
The organization of the paper is as follows. In the next section we list several concepts and lemmas in a real Hilbert space that will be used in the convergence analysis of the algorithms. The main achievements of this paper are presented in Section 3.
 A brief summary and commentary on the research results of this paper are  provided in Section 4.

\vskip 2mm

\section{Preliminaries}

Let $\mathcal{H}$ be a real Hilbert space with inner product $\langle \cdot,\cdot\rangle$ and norm $\|\cdot\|$. Some concepts and tools are listed in this section, which will be used in the proofs of our main results. The following notations will be used
throughout the rest of the paper.
\begin{itemize}
  \item [(i)] $I: \mathcal{H}\rightarrow \mathcal{H}$ denotes the identity operator.
  \item [(ii)]  $\rightarrow $ and $\rightharpoonup$ denote the strong convergence and  the weak convergence, respectively.
  \item [(iii)] $\omega_w(x^k) :=\{x\mid\exists \,\, {\rm subsequence}\,\,   \{x^{k_i}\}_{i=0}^\infty\subset\{x^k\}_{k=0}^\infty$
  such that $ x^{k_i} \rightharpoonup x\}$ denotes the $\omega$-weak limit point set of $ \{x^k\}_{k=0}^\infty$.
 \end{itemize}

Let $C$ be a closed convex subset of $\mathcal{H}$. For an arbitrary point $u\in \mathcal{H}$,
the classical metric projection of
$u$ onto $C$, denoted by $P_Cu$, is defined by
$$
P_Cu:= \argmin\{  \| u- v\|\, |\,\,\forall v\in C\}.
$$

\begin{lemma} \label{le:2.2} {\rm(\cite[Section 3]{GR84})}
Let $C$ be a closed convex subset of $\mathcal{H}$. For $x\in \mathcal{H}$ and  $z\in C$, $z=P_{C}x$ if and only if
$$
\langle x-z,y-z \rangle\leq0,\quad \forall\,y\in C.
$$
\end{lemma}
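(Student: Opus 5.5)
We prove both directions of the characterization from the defining minimization property of $P_C x$. Since $P_C x$ is written as a single point, we first recall why it is well defined: the function $v\mapsto\|x-v\|^2$ is strongly convex, so it has at most one minimizer on the convex set $C$. Existence follows in the usual way, because a minimizing sequence in $C$ is Cauchy by the parallelogram law and $C$ is closed. With this in hand we only need to show that $z\in C$ minimizes $\|x-v\|$ over $C$ exactly when $\langle x-z,y-z\rangle\le 0$ for all $y\in C$.

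\emph{Necessity.} Suppose $z=P_Cx$ and fix $y\in C$. For $t\in(0,1]$ the point $z_t:=z+t(y-z)=(1-t)z+ty$ lies in $C$ by convexity. Minimality of $z$ gives $\|x-z\|^2\le\|x-z_t\|^2$. Expanding the right-hand side,
\[
\|x-z_t\|^2=\|x-z\|^2-2t\langle x-z,y-z\rangle+t^2\|y-z\|^2 .
\]
Hence $0\le -2t\langle x-z,y-z\rangle+t^2\|y-z\|^2$. Dividing by $2t>0$ and letting $t\downarrow 0$ yields $\langle x-z,y-z\rangle\le 0$. The only point needing care is that we divide by $t$ before taking the limit, so the $t^2$ term vanishes.

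\emph{Sufficiency.} Suppose $z\in C$ satisfies the inequality and let $y\in C$. Writing $x-y=(x-z)+(z-y)$ gives
\[
\|x-y\|^2=\|x-z\|^2+2\langle x-z,z-y\rangle+\|z-y\|^2 .
\]
By hypothesis $\langle x-z,z-y\rangle=-\langle x-z,y-z\rangle\ge 0$, so $\|x-y\|^2\ge\|x-z\|^2$. Thus $z$ minimizes the distance to $x$ over $C$, and by uniqueness of the minimizer, $z=P_Cx$.

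Both directions reduce to expanding a squared norm in $\mathcal{H}$, so neither is a real obstacle. The most delicate step is the limiting argument in the necessity part, together with remembering that uniqueness of the minimizer is what justifies the identification $z=P_Cx$ at the end of the sufficiency part.
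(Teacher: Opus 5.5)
Your proof is correct. The paper gives no proof of its own and simply cites Goebel--Reich, where the characterization is obtained by this same standard argument: perturb along the segment $z+t(y-z)\in C$ and let $t\downarrow 0$ for necessity, and expand $\|x-y\|^2$ around $z$ for sufficiency. As a minor point, your sufficiency expansion already gives $\|x-y\|^2\ge\|x-z\|^2+\|z-y\|^2$, so uniqueness of the minimizer comes for free there and the separate appeal to it is unnecessary.
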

Recall that a mapping $T: \mathcal{H}\rightarrow \mathcal{H}$ is called nonexpansive if
$$\|Tx-Ty\|\leq \|x-y\|$$
for all $x,y\in \mathcal{H}$.



\begin{lemma} {\rm (The demiclosedness principle for nonexpansive mappings \cite{GK1990})\label{lemmaGK}}
Let $C$ be a nonempty closed convex subset of a real Hilbert space $\mathcal{H}$ and let $T: C\rightarrow C$ be a nonexpansive mapping such that $\Fix(T)\neq\emptyset$. If a sequence $\{x^k\}_{k=0}^\infty$ in $C$ is such that $x^k\rightharpoonup z$ and $\|x^k-Tx^k\|\rightarrow 0$, then $z=Tz$.
\end{lemma}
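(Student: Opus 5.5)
The statement is the demiclosedness principle in Lemma \ref{lemmaGK}: if $x^k\rightharpoonup z$ and $\|x^k-Tx^k\|\to 0$, then $z=Tz$. I would prove it directly from the Hilbert space structure, using only nonexpansiveness of $T$ and the defining property of weak convergence. Since $C$ is closed and convex, it is weakly closed, so the weak limit $z$ lies in $C$ and $Tz$ is defined. The idea is to compare the two quantities $\limsup_k\|x^k-Tz\|^2$ and $\limsup_k\|x^k-z\|^2$ and show that the first equals the second plus $\|z-Tz\|^2$, while nonexpansiveness forces it to be no larger than the second.

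Write $x^k-Tz=(x^k-z)+(z-Tz)$ and expand:
$$
\|x^k-Tz\|^2=\|x^k-z\|^2+2\langle x^k-z,z-Tz\rangle+\|z-Tz\|^2 .
$$
Because $x^k\rightharpoonup z$, the middle term tends to $0$. Hence
$$
\limsup_{k}\|x^k-Tz\|^2=\limsup_{k}\|x^k-z\|^2+\|z-Tz\|^2 .
$$
This is the Opial-type identity in Hilbert space.

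For the upper bound, use nonexpansiveness of $T$:
$$
\|x^k-Tz\|\le \|x^k-Tx^k\|+\|Tx^k-Tz\|\le \|x^k-Tx^k\|+\|x^k-z\|.
$$
A weakly convergent sequence is bounded by the uniform boundedness principle, so $\|x^k-z\|$ is bounded. Together with $\|x^k-Tx^k\|\to0$, this gives $\limsup_k\|x^k-Tz\|^2\le\limsup_k\|x^k-z\|^2$; the squares are handled by the boundedness just noted. Comparing with the identity above yields $\|z-Tz\|^2\le 0$, so $z=Tz$.

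The hypothesis $\Fix(T)\neq\emptyset$ is not needed for this argument. The only delicate point is passing to $\limsup$ after squaring, and boundedness of the sequence settles it. I expect no real obstacle beyond checking that $z\in C$, which follows from weak closedness of closed convex sets (Mazur's lemma).
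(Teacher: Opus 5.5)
Your proof is correct. The paper gives no proof of this lemma; it simply quotes it from Goebel and Kirk. What you wrote is the classical Opial-type proof of the principle in Hilbert space, so it serves as a self-contained substitute for the citation.

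The three ingredients all hold and combine as you say. The expansion $\|x^k-Tz\|^2=\|x^k-z\|^2+2\langle x^k-z,z-Tz\rangle+\|z-Tz\|^2$ has a middle term that vanishes under weak convergence. Nonexpansiveness gives $\|x^k-Tz\|\le\|x^k-Tx^k\|+\|x^k-z\|$. The weakly convergent sequence is bounded. Together these force $\|z-Tz\|^2\le 0$. You also rightly check that $z\in C$, by weak closedness of closed convex sets, so that $Tz$ is defined.

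Your remark that $\Fix(T)\neq\emptyset$ is superfluous is correct. Your argument also never uses $T(C)\subset C$, so it proves the statement for any nonexpansive $T:C\to\mathcal{H}$.
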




The following result is elementary and can be verified straightforwardly. We formalize it as a lemma as it will be useful in our analysis.
\begin{lemma}
The identity
\begin{equation}\label{identity}
\varphi\|a\|^2+2\langle a,b\rangle+\|c\|^2-\|a+c\|^2=\frac{\varphi+1}{2}\|a\|^2-\frac{2}{\varphi+1}\|a+c-b\|^2+\frac{2}{\varphi+1}\|a+c-b-\frac{\varphi+1}{2}a\|^2
\end{equation}
holds for any $a,b,c\in \mathcal{H}$ and any number $\varphi>-1$.
\end{lemma}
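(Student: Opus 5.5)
Since the identity (\ref{identity}) is purely algebraic in a real inner product space, the plan is to expand every squared norm on the right-hand side with $\|x+y\|^2=\|x\|^2+2\langle x,y\rangle+\|y\|^2$ and check that the result equals the left-hand side. The hypothesis $\varphi>-1$ is needed only so that $\varphi+1\neq 0$ and the coefficients $\frac{2}{\varphi+1}$ are defined; no other property of $\varphi$ enters.

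The key step is to set $w:=a+c-b$, which appears in both of the last two terms. Then
\[
\|w-\tfrac{\varphi+1}{2}a\|^2=\|w\|^2-(\varphi+1)\langle w,a\rangle+\tfrac{(\varphi+1)^2}{4}\|a\|^2 .
\]
Multiplying by $\frac{2}{\varphi+1}$ gives
\[
\tfrac{2}{\varphi+1}\|w\|^2-2\langle w,a\rangle+\tfrac{\varphi+1}{2}\|a\|^2 .
\]
The $\frac{2}{\varphi+1}\|w\|^2$ term cancels exactly against $-\frac{2}{\varphi+1}\|a+c-b\|^2$, so the whole right-hand side collapses to
\[
(\varphi+1)\|a\|^2-2\langle a+c-b,a\rangle=(\varphi+1)\|a\|^2-2\|a\|^2-2\langle a,c\rangle+2\langle a,b\rangle,
\]
which is $(\varphi-1)\|a\|^2-2\langle a,c\rangle+2\langle a,b\rangle$.

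Finally, I will expand the left-hand side:
\[
\varphi\|a\|^2+2\langle a,b\rangle+\|c\|^2-\|a\|^2-2\langle a,c\rangle-\|c\|^2=(\varphi-1)\|a\|^2+2\langle a,b\rangle-2\langle a,c\rangle .
\]
This matches the reduced right-hand side, which proves the identity. There is no genuine obstacle here. The only places needing care are the sign of the cross term $-(\varphi+1)\langle w,a\rangle$ and noticing that the $\|w\|^2$ contributions cancel. Recognizing that cancellation first is what keeps the computation short, because it avoids ever expanding $\|a+c-b\|^2$ into six terms.
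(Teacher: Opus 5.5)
Your proposal is correct and takes the same approach as the paper, which gives no written proof and only says the identity is elementary and can be verified directly by expansion. Introducing $w=a+c-b$ so that the $\frac{2}{\varphi+1}\|w\|^2$ terms cancel before anything else is expanded is a tidy way to carry out that check, and you are right that the identity only needs $\varphi\neq -1$.
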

\begin{lemma}{\rm\cite{Xu2002}}
\label{lem25}
Assume that $\{a_{k}\}_{k=0}^\infty$ is a sequence of nonnegative real numbers such that
$$ a_{k+1}\leq(1-\gamma_{k})a_{k}+\gamma_{k}\delta_{k},~k \geq 0,$$
where $\{\gamma_{k}\}_{k=0}^\infty$ is a sequence in $(0,1)$ and $\{\delta_{k}\}_{k=0}^\infty$ is a real sequence such that
\begin{itemize}
\item[{\rm(i)}]$\sum_{k=0}^{\infty} \gamma_k=\infty$;
\item[{\rm(ii)}]$ \limsup_{k\rightarrow \infty}\delta_{k}\leq 0~ or~ \sum_{k=0}^{\infty} |\gamma_k\delta_{k}|<\infty.$
\end{itemize}
Then $\lim_{k\rightarrow \infty} a_{k}=0.$
\end{lemma}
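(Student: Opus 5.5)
This is the classical lemma of Xu, and I would prove it directly by unrolling the recursion, handling the two alternatives in (ii) separately. Both cases rest on one fact: for every fixed $N$, $\prod_{j=N}^{k}(1-\gamma_j)\to 0$ as $k\to\infty$. To see this, use $1-t\le e^{-t}$ for $t\in(0,1)$, which gives $\prod_{j=N}^{k}(1-\gamma_j)\le \exp\bigl(-\sum_{j=N}^{k}\gamma_j\bigr)$. The right-hand side tends to $0$ by (i), because removing finitely many terms does not affect divergence of $\sum\gamma_k$.

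\emph{Case $\limsup_{k\to\infty}\delta_k\le 0$.} Fix $\varepsilon>0$ and choose $N$ with $\delta_k\le\varepsilon$ for all $k\ge N$. Since $(1-\gamma_k)\varepsilon+\gamma_k\varepsilon=\varepsilon$, for $k\ge N$ the recursion gives
\[
a_{k+1}-\varepsilon\le(1-\gamma_k)(a_k-\varepsilon).
\]
The only subtlety is that $a_k-\varepsilon$ may be negative, so the inequality cannot be iterated naively. To avoid this, set $b_k:=\max\{a_k-\varepsilon,0\}$. Using $1-\gamma_k>0$ we get $(1-\gamma_k)(a_k-\varepsilon)\le(1-\gamma_k)b_k$. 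Hence $b_{k+1}\le(1-\gamma_k)b_k$, because $b_{k+1}$ is either $0$ or equal to $a_{k+1}-\varepsilon$. Iterating yields $b_{k+1}\le\prod_{j=N}^{k}(1-\gamma_j)\,b_N\to0$, so $\limsup_k a_k\le\varepsilon$. Since $\varepsilon$ is arbitrary and $a_k\ge0$, we conclude $a_k\to0$.

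\emph{Case $\sum_k|\gamma_k\delta_k|<\infty$.} Bound $\gamma_k\delta_k\le|\gamma_k\delta_k|$ and unroll from an arbitrary index $N$. Since every factor $1-\gamma_j$ lies in $(0,1)$, this gives
\[
a_{k+1}\le \prod_{j=N}^{k}(1-\gamma_j)\,a_N+\sum_{j=N}^{k}|\gamma_j\delta_j|.
\]
Let $k\to\infty$ with $N$ fixed; the product term vanishes, so $\limsup_k a_k\le\sum_{j\ge N}|\gamma_j\delta_j|$. This tail tends to $0$ as $N\to\infty$, which gives $a_k\to0$.

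There is no serious obstacle. The one step needing care is the possible negativity of $a_k-\varepsilon$ in the first case, which the truncation $b_k$ handles.
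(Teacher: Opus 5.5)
Your proof is correct. The paper gives no proof of its own and simply cites Xu (2002). Your argument is essentially the standard proof from that source: unroll the recursion, use $\prod_{j=N}^{k}(1-\gamma_j)\le\exp\bigl(-\sum_{j=N}^{k}\gamma_j\bigr)\to 0$, then finish with an $\varepsilon$-argument or a tail-sum argument for the two alternatives in (ii).

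One minor correction: the truncation $b_k$ is valid but unnecessary. Multiplying an inequality by the positive factors $1-\gamma_j$ preserves it whatever the sign of $a_k-\varepsilon$, so direct iteration already gives $a_{k+1}-\varepsilon\le\prod_{j=N}^{k}(1-\gamma_j)\,(a_N-\varepsilon)$, and the "subtlety" you flag is not actually an obstruction.
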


\section{Main results}
\vskip 1mm
Both estimates (\ref{rate-L}) by Lieder  and  (\ref{rate}) by He et al. are  established for nonexpansive mappings under the assumption that the anchor $u$ coincides with the initial guess $x^0$, i.e., $u=x^0$. To the best of our knowledge, no convergence rate estimates have been given for the general case where $u$ and $x^0$ are possibly distinct. In this section, we address this issue.
\vskip 2mm

\subsection{Convergence rates of general Halpern iterations with predetermined anchoring parameters}

In this subsection, we estimate the convergence rate of the following Halpern iteration
\begin{equation}\label{algorithm-(u,x0)}
x^k=\frac{1}{k+1}u+\frac{k}{k+1}Tx^{k-1},\quad k\geq 1,
\end{equation}
where the anchor $u$  and the initial guess $x^0$ are  chosen  arbitrarily in $\mathcal{H}$.

\begin{theorem}\label{theorem-4.1}
Assume that $T:\mathcal{H}\rightarrow \mathcal{H}$ is a nonexpansive mapping such that $\Fix(T)\neq \emptyset$.
Let $ \{x^k\}_{k=0}^\infty$ be a sequence generated by  the scheme \eqref{algorithm-(u,x0)}.
Then we have
\begin{equation}\label{rate-4-1}
\|x^k-Tx^k\|\leq \frac{2}{k+1}\left(\|u-x^0\|+\|u-x^*\|\right),\quad k\geq 1,
\end{equation}
where $x^*$ is an arbitrary fixed point of $T$.
The convergence rate estimate \eqref{rate-4-1} is tight.
\end{theorem}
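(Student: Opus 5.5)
The plan is to reduce the general scheme \eqref{algorithm-(u,x0)} to the classical one with $u=x^0$, where Lieder's estimate \eqref{rate-L} is available. I introduce an auxiliary Halpern sequence that has the same anchor $u$ but starts at $u$:
\[
y^0=u,\qquad y^k=\frac{1}{k+1}u+\frac{k}{k+1}Ty^{k-1},\quad k\geq 1 .
\]
For $\{y^k\}$ the anchor equals the initial guess. So, taking \eqref{rate-L} as stated in the paper (Lieder's result), I get for every $x^*\in\Fix(T)$
\[
\|y^k-Ty^k\|\leq \frac{2}{k+1}\|u-x^*\|,\qquad k\ge1 .
\]

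Next I compare the two sequences. Subtracting the recursions, the anchor terms cancel, and nonexpansiveness of $T$ gives
\[
\|x^k-y^k\|=\frac{k}{k+1}\|Tx^{k-1}-Ty^{k-1}\|\leq \frac{k}{k+1}\|x^{k-1}-y^{k-1}\| .
\]
Iterating this and telescoping the product $\prod_{j=1}^k\frac{j}{j+1}=\frac{1}{k+1}$ yields
\[
\|x^k-y^k\|\le \frac{1}{k+1}\|x^0-y^0\|=\frac{1}{k+1}\|x^0-u\| .
\]

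Finally, by the triangle inequality and nonexpansiveness of $T$,
\[
\|x^k-Tx^k\|\le \|x^k-y^k\|+\|y^k-Ty^k\|+\|Ty^k-Tx^k\|\le \|y^k-Ty^k\|+2\|x^k-y^k\| .
\]
Combining this with the two previous bounds gives exactly \eqref{rate-4-1}. I do not expect any real obstacle in this part. The only point to check is that Lieder's bound applies verbatim to $\{y^k\}$, which holds since $\{y^k\}$ is precisely scheme \eqref{Halpern-1} with $\lambda_k=\frac1{k+1}$ and initial guess equal to the anchor.

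For tightness, the subtle step is to exhibit, for every $k\ge1$, an instance where equality holds. I use the example $\mathcal H=\mathbb R$, $Tx=-x$, so that $\Fix(T)=\{0\}$ and $x^*=0$, with $u=0$ and $x^0\neq0$ arbitrary. The recursion becomes $(k+1)x^k=-kx^{k-1}$, so $x^k=\frac{(-1)^k}{k+1}x^0$. Hence
\[
\|x^k-Tx^k\|=2|x^k|=\frac{2}{k+1}|x^0|=\frac{2}{k+1}\bigl(|u-x^0|+|u-x^*|\bigr),
\]
so equality holds in \eqref{rate-4-1} for all $k\ge1$ simultaneously. Alternatively, taking $u=x^0$ recovers Lieder's tight instances, which also shows that the estimate cannot be improved.
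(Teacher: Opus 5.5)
Your proof of the bound is the same as the paper's: the auxiliary sequence with $y^0=u$, Lieder's estimate \eqref{rate-L} applied to $\{y^k\}$, the contraction $\|x^k-y^k\|\le\frac{1}{k+1}\|x^0-u\|$, and the three-term triangle inequality.

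The tightness argument is where you genuinely differ. The paper takes $T=-I$ on $\mathbb{R}^2$ with $u=(1,0)^\top$ and $x^0=(2,0)^\top$, so that $\|u-x^0\|=\|u-x^*\|=1$. It obtains equality in \eqref{rate-4-1} only at even $k$; at odd $k$ one gets $\|x^k-Tx^k\|=\frac{2}{k+1}$, half the bound.

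Your one-dimensional instance with $u=x^*=0$ is correct. It gives equality at every $k\ge1$, and the computation is simpler. However, the term $\|u-x^*\|$ vanishes there, so it only certifies the coefficient of $\|u-x^0\|$. Combined with Lieder's instance ($u=x^0$), you get that each coefficient is sharp in isolation.

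What your pair of examples does not show, and the paper's does, is that the two contributions really add up when both are present. For instance, a bound such as $\frac{2}{k+1}\max\{\|u-x^0\|,\|u-x^*\|\}$ is consistent with both of your examples, but it fails for the paper's example at every even $k$.

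Both arguments establish tightness in the sense of an attained equality. Yours attains it for all $k$. The paper's attains it in the non-degenerate configuration $u\neq x^0$, $u\notin\Fix(T)$.
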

\begin{proof}
Let $y^0=u$ and let $ \{y^k\}_{k=0}^\infty$ be a sequence generated by the scheme:
\begin{equation}\label{algorithm-(u=y0)}
y^k=\frac{1}{k+1}u+\frac{k}{k+1}Ty^{k-1},\quad k\geq 1.
\end{equation}
According to (\ref{rate-L}), one has that for any $x^*\in \Fix(T)$,
\begin{equation}\label{estimation-1}
\|y^k-Ty^k\|\leq\frac{2}{k+1}\|y^0-x^*\|,\quad \forall k\geq 1.
\end{equation}
On the other hand,  we get from (\ref{algorithm-(u,x0)}) and (\ref{algorithm-(u=y0)}) that
\begin{equation*}
\|x^k-y^k\|= \frac{k}{k+1}\|Tx^{k-1}-Ty^{k-1}\|\leq\frac{k}{k+1}\|x^{k-1}-y^{k-1}\|,\,\,\,\,\forall k\geq 1.
\end{equation*}
Consequently, it follows by induction that
\begin{equation}\label{estimation-2}
\|x^k-y^k\|\leq\frac{1}{k+1}\|x^{0}-y^{0}\|,\,\,\,\,\forall k\geq 1.
\end{equation}
Noting $y^0=u$, we obtain by combining (\ref{estimation-1}) and (\ref{estimation-2}) that
\begin{equation}
\aligned
\|x^k-Tx^k\|&\leq \|x^k-y^k\|+\|y^k-Ty^k\|+\|Ty^k-Tx^k\|\\
&\leq 2\|x^k-y^k\|+\|y^k-Ty^k\|\\
&\leq \frac{2}{k+1}\left(\|y^0-x^0\|+\|y^0-x^*\|\right)\\
&=\frac{2}{k+1}\left(\|u-x^0\|+\|u-x^*\|\right).
\endaligned
\end{equation}
Thus, (\ref{rate-4-1}) is established.
\end{proof}

The tightness of (\ref{rate-4-1}) will be confirmed by the following example.

\begin{example}
\label{example31}
   \rm
Let $\mathcal{H}=\mathbb{R}^2$ and  $T: \mathbb{R}^2\rightarrow \mathbb{R}^2$ be defined by
\begin{equation}\label{T}
T\left(
\aligned
&\xi\\
&\eta
\endaligned
\right)
=
\left(
\begin{matrix}
-1 & 0\\
0 & -1\\
\end{matrix}
\right)
\left(
\aligned
&\xi\\
&\eta
\endaligned
\right),\,\,\forall
\left(
\aligned
&\xi\\
&\eta
\endaligned
\right)
\in \mathbb{R}^2.
\end{equation}
Since $T$ is a counterclockwise $\pi$-rotation centered at $(0,0)^\top$,  it is obviously a  nonexpansive mapping with the unique fixed point $x^*=(0,0)^\top$.
\end{example}

Let $\{x^k\}_{k=0}^\infty$ be the sequence generated by the Halpern iteration (\ref{algorithm-(u,x0)}) with $u=(1,0)^\top$ and $x^0=(2, 0)^\top$. It is easy to get from (\ref{algorithm-(u,x0)}) and (\ref{T}) that
\begin{equation*}\label{Halpern xk}
x^{k}=\frac{1}{k+1}\{u+Tu+T^2u+\cdots+T^{k-1}u+T^kx^0\},\,\,\,\,\forall k\geq 1.
\end{equation*}
Consequently, we get
\begin{equation}\label{4-xk-Txk*}
x^k-Tx^k=\frac{1}{k+1}(u-T^ku+T^kx^0-T^{k+1}x^0),\,\,\,\,\forall k\geq 1.
\end{equation}
Note that $T$ is norm-preserving as a rotation operator, therefore for all $k\geq 1$,
\begin{equation}\label{rotation rate}
\begin{aligned}
\|x^k-Tx^k\|\geq&\frac{1}{k+1}(\|T^k(x^0-Tx^0)\|-\|u-T^ku\|)\\
\geq&\frac{1}{k+1}(\|x^0-Tx^0\|-2\|u\|)\\
=&\frac{2(\|x^0\|-\|u\|)}{k+1}\\
=&\frac{2}{k+1}.
\end{aligned}
\end{equation}
Combining (\ref{rate-4-1}) and (\ref{rotation rate}),  we assert that
 $\|x^k-Tx^k\|$ and $\frac{1}{k+1}$ are  infinitesimals  of  the same order. Hence the Halpern iteration (\ref{algorithm-(u,x0)}) has sublinear convergence rate. Moreover, using $T^2=I$, we also have from (\ref{4-xk-Txk*}) that
 $$\|x^{2k}-Tx^{2k}\|=\frac{1}{2k+1}\|x^0-Tx^0\|=\frac{4}{2k+1}=\frac{2}{2k+1}(\|u-x^0\|+\|u-x^*\|),\,\,\forall k\geq 1.$$
 This implies that (\ref{rate-4-1}) is tight.

\begin{remark}
\rm
When $u=x^0$, (\ref{rate-4-1}) becomes (\ref{rate-L}), hence the latter is a special case of the former.
\end{remark}

Next we investigate the convergence rate estimate for the Halpern iteration
\begin{equation}\label{algorithm-(u=2x0-Tx0)}
x^k=\frac{1}{k+3}u+\frac{k+2}{k+3}Tx^{k-1},\quad k\geq 1,
\end{equation}
where the initial guess $x^0$ is  chosen arbitrarily in $\mathcal{H}$  and the anchor $u=2x^0-Tx^0$.
\begin{theorem}\label{theorem-4.2}
Assume that $T:\mathcal{H}\rightarrow \mathcal{H}$ is a nonexpansive mapping such that $\Fix(T)\neq \emptyset$.
Let   $ \{x^k\}_{k=0}^\infty$ be a sequence generated by  the scheme \eqref{algorithm-(u=2x0-Tx0)}.
Then we have
\begin{equation}\label{rate-2-4}
\|x^k-Tx^k\|\leq \frac{2}{k+3}\|u-x^*\|,\quad k\geq 1,
\end{equation}
where $x^*$ is an arbitrary fixed point of $T$.   The convergence rate estimate \eqref{rate-2-4} is tight.
\end{theorem}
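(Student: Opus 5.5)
The plan is a direct Lyapunov (potential-function) argument of the Lieder / He--Xu--Dong type, not a reduction to Theorem~\ref{theorem-4.1}. The reason is that the weights $\frac{1}{k+3}$ make \eqref{algorithm-(u=2x0-Tx0)} behave like the standard Halpern iteration with $\lambda_n=\frac1{n+1}$ started at index $n=2$. The choice $u=2x^0-Tx^0$, i.e.
$$x^0-u=-(x^0-Tx^0),$$
is exactly what supplies the missing initial information that Lieder's argument gets for free from $u=x^0$. I would not try to embed $\{x^k\}$ into a sequence with $y^0=u$: matching $x^0$ with an index-$2$ iterate forces an unnatural value of $Ty^1$, and the triangle-inequality loss in Theorem~\ref{theorem-4.1} would destroy tightness anyway.

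\textbf{Notation and potential.} Write $g^k:=x^k-Tx^k$ and fix $x^*\in\Fix(T)$. From the scheme,
$$x^k-u=\tfrac{k+2}{k+3}(Tx^{k-1}-u) \quad\text{and}\quad x^k-Tx^{k-1}=\tfrac1{k+3}(u-Tx^{k-1}).$$
I would introduce a quantity of the form
\[
\Phi_k=\tfrac{(k+2)(k+3)}{2}\|g^k\|^2+(k+3)\langle g^k,x^k-u\rangle ,
\]
adjusted if necessary, and prove two things. First, $\Phi_k$ is nonincreasing. Second, $\Phi_k$ controls the rate: using the identity \eqref{identity} with $\varphi=k+2$ together with $\langle g^k,x^k-x^*\rangle\ge\frac12\|g^k\|^2$ (which follows from nonexpansiveness at $x^*$), one gets $\frac{(k+3)^2}{4}\|g^k\|^2\le \Phi_k+\|u-x^*\|^2$ up to a nonpositive square term.

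\textbf{Monotonicity.} This comes from the nonexpansiveness inequality between consecutive iterates,
$$\|Tx^k-Tx^{k-1}\|^2\le\|x^k-x^{k-1}\|^2.$$
After substituting the two expressions above, this should rearrange into $\Phi_k\le\Phi_{k-1}$, exactly as in Lieder's telescoping proof with the index shifted by $2$.

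\textbf{Base case.} The hypothesis enters only at $k=0$. Since $x^0-u=-g^0$,
$$\Phi_0=3\|g^0\|^2-3\|g^0\|^2=0.$$
Hence $\Phi_k\le0$ for all $k$, and the rate inequality gives $\|g^k\|^2\le\frac4{(k+3)^2}\|u-x^*\|^2$, which is \eqref{rate-2-4}.

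\textbf{Main obstacle.} I expect the hardest step to be choosing the coefficients of $\Phi_k$ so that three requirements hold at once: $\Phi_0$ vanishes (or is nonpositive) under $u=2x^0-Tx^0$, the one-step inequality telescopes cleanly, and the final completion of squares via \eqref{identity} gives exactly the constant $\frac{2}{k+3}$. The first thing I would try is to copy Lieder's weights with $k$ replaced by $k+2$. If the cross terms do not cancel, I would include a multiple of $\|x^k-x^*\|^2$ or of $\langle g^k,x^k-x^*\rangle$ in $\Phi_k$ and solve for the multipliers.

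\textbf{Tightness.} I would reuse Example~\ref{example31}: $T=-I$ on $\mathbb{R}^2$ and $x^*=0$. Take $x^0=(1,0)^\top$, so that $u=3x^0$. Unrolling the recursion as in \eqref{4-xk-Txk*}, and using $T^2=I$, I would compute $x^k$ and $g^k$ explicitly, and check that at even $k$ (or whichever parity makes the alternating sum favourable) one gets $\|g^k\|=\frac{2}{k+3}\|u-x^*\|$. This shows the constant in \eqref{rate-2-4} cannot be improved.
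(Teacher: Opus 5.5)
Your proposal is correct and is essentially the paper's proof. Your $\Phi_k$ is exactly $\frac{(k+2)(k+3)}{2}$ times the paper's quantity $\|x^k-Tx^k\|^2+\frac{2}{k+2}\langle x^k-u,x^k-Tx^k\rangle$, and the consecutive-iterate nonexpansiveness inequality rearranges to precisely $\Phi_k\le\Phi_{k-1}$ with no adjustment of coefficients needed. Your $\Phi_0=0$ is the paper's base computation at $k=1$, the rate then follows from \eqref{identity} with $\varphi=k+2$ as in the paper, and your tightness example ($T=-I$, $u=3x^0$) is the paper's, with equality at even $k$ (at odd $k$ one in fact gets $x^k=Tx^k=0$).
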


\begin{proof}
We first show (\ref{rate-2-4}).  It follows from (\ref{algorithm-(u=2x0-Tx0)}) that
$$x^k-x^{k-1}=\frac{1}{k+3}(u-Tx^{k-1})+(Tx^{k-1}-x^{k-1}).$$
Consequently, we obtain
\begin{equation}\label{x-x}
\aligned
&\|x^k-x^{k-1}\|^2\\
=&\frac{1}{(k+3)^2}\|u-Tx^{k-1}\|^2+\|x^{k-1}-Tx^{k-1}\|^2+\frac{2}{k+3}\langle u-Tx^{k-1}, Tx^{k-1}-x^{k-1}\rangle\\
=&\frac{1}{(k+3)^2}\|u-Tx^{k-1}\|^2+\frac{k+1}{k+3}\|x^{k-1}-Tx^{k-1}\|^2+\frac{2}{k+3}\langle u-x^{k-1}, Tx^{k-1}-x^{k-1}\rangle.
\endaligned
\end{equation}
Using (\ref{algorithm-(u=2x0-Tx0)}) again, we also have
$$x^k-u=\frac{k+2}{k+3}(Tx^{k-1}-u),$$
and
$$Tx^{k-1}-Tx^k=\frac{1}{k+2}(x^k-u)+(x^k-Tx^k),$$
which imply
\begin{equation}\label{di-xiao}
\frac{1}{(k+2)^2}\|x^k-u\|^2=\frac{1}{(k+3)^2}\|u-Tx^{k-1}\|^2,
\end{equation}
and
\begin{equation}\label{Tx-Tx}
\|Tx^{k-1}-Tx^k\|^2=\frac{1}{(k+2)^2}\|x^k-u\|^2+\|x^k-Tx^k\|^2+\frac{2}{k+2}\langle x^k-u, x^k-Tx^k\rangle,
\end{equation}
respectively. Using the nonexpansiveness of $T$ and combining (\ref{x-x})-(\ref{Tx-Tx}), we deduce
\begin{equation}\label{ditui-inequality}
\aligned
&\|x^k-Tx^k\|^2+\frac{2}{k+2}\langle x^k-u, x^k-Tx^k\rangle\\
\leq&\|x^{k-1}-x^k\|^2-\frac{1}{(k+2)^2}\|x^k-u\|^2\\
\leq &\frac{k+1}{k+3}\|x^{k-1}-Tx^{k-1}\|^2+\frac{2}{k+3}\langle u-x^{k-1}, Tx^{k-1}-x^{k-1}\rangle.
\endaligned
\end{equation}
Letting $k=1$ in (\ref{ditui-inequality}) and substituting $u=2x^0-Tx^0$ into its right-hand side  yields
\begin{equation}\label{induction begin}
\|x^1-Tx^1\|^2+\frac{2}{3}\langle x^1-u, x^1-Tx^1\rangle\leq \frac{1}{2}\|x^0-Tx^0\|^2+\frac{1}{2}\langle x^0-Tx^0, Tx^0-x^0\rangle=0.
\end{equation}
Based on (\ref{ditui-inequality}) and (\ref{induction begin}),  it can be easily proven by induction that
\begin{equation}\label{jiben-inequality}
\|x^k-Tx^k\|^2+\frac{2}{k+2}\langle x^k-u, x^k-Tx^k\rangle\leq 0,\,\,\,\forall k\geq 1,
\end{equation}
which yields
\begin{equation}
\|x^k-Tx^k\|\leq\frac{2}{k+2}\| x^k-u\|,\,\,\,\forall k\geq 1.
\end{equation}
Let $a=x^k-Tx^k$, $b=x^k-u$, $c=Tx^k-x^*$ and $\varphi=k+2$. Then $a+c=x^k-x^*$ and $a+c-b=u-x^*$. Using (\ref{identity}), we have
\begin{equation}\label{4.15}
\aligned
&(k+2)\|x^k-Tx^k\|^2+2\langle x^k-Tx^k, x^k-u\rangle+\|Tx^k-x^*\|^2-\|x^k-x^*\|^2\\
=& \frac{k+3}{2}\|x^k-Tx^k\|^2-\frac{2}{k+3}\|u-x^*\|^2+\frac{2}{k+3}\|u-x^*-\frac{k+3}{2}(x^k-Tx^k)\|^2.
\endaligned
\end{equation}
Meanwhile, we get from (\ref{jiben-inequality}) that
\begin{equation}\label{4.16}
(k+2)\|x^k-Tx^k\|^2+2\langle x^k-u, x^k-Tx^k\rangle+\|Tx^k-x^*\|^2-\|x^k-x^*\|\leq 0,\,\,\,\forall k\geq 1.
\end{equation}
Comparing (\ref{4.15}) and (\ref{4.16}), it is easy to see that
$$\frac{k+3}{2}\|x^k-Tx^k\|^2-\frac{2}{k+3}\|u-x^*\|^2\leq 0,$$
and consequently  (\ref{rate-2-4}) holds true.

Next, we show that (\ref{rate-2-4}) is tight. To this end, we  adopt the scheme (\ref{algorithm-(u=2x0-Tx0)}) to solve Example \ref{example31}, where $x^0\neq (0,0)^\top$  is chosen arbitrarily  and $u=2x^0-Tx^0$.
It is easy to get from (\ref{algorithm-(u=2x0-Tx0)}) and (\ref{T}) that
\begin{equation*}\label{Halpern xk}
x^{k}=\frac{1}{k+3}\left(u+Tu+T^2u+\cdots+T^{k-1}u+3T^kx^0\right),\,\,\,\,\forall k\geq 1.
\end{equation*}
Consequently, we get
\begin{equation}\label{4-xk-Txk}
x^k-Tx^k=\frac{1}{k+3}\left[u-T^ku+3(T^kx^0-T^{k+1}x^0)\right],\,\,\,\,\forall k\geq 1.
\end{equation}
Using $T^2=I$ and \eqref{4-xk-Txk}, we have
$$
\|x^{2k}-Tx^{2k}\|=\frac{3}{2k+3}\|x^0-Tx^0\|=\frac{6}{2k+3}\|x^0\|=\frac{2}{2k+3}\|u-x^*\|.
$$
This implies that (\ref{rate-2-4}) is tight.
\end{proof}

\begin{remark}
\rm
It is obvious that the convergence rate  \eqref{rate-2-4} is better than  (\ref{rate-4-1}). Furthermore, let $ \{x^k\}_{k=0}^\infty$ be generated by the scheme (\ref{algorithm-(u,x0)}) with $u=2x^0-Tx^0$,  then by (\ref{rate-4-1}) we obtain the following convergence rate estimate:
\begin{equation}\label{rate-4-17}
\|x^k-Tx^k\|\leq \frac{2}{k+1}\{\|x^0-Tx^0\|+\|u-x^*\|\},\quad k\geq 1,
\end{equation}
which is also worse than (\ref{rate-2-4}).
\end{remark}

\subsection{Convergence rate of general Halpern iterations with adaptive anchoring parameters}

In this subsection, we  consider the general Halpern iteration with adaptive anchoring parameters:
\begin{equation}\label{eq3.2}
x^k=\frac{1}{\varphi_k+1}u+\frac{\varphi_k}{\varphi_k+1}Tx^{k-1},\quad k\geq 1,
\end{equation}
where    $\{\varphi_k\}_{k=1}^\infty$ is given by
  \begin{equation}\label{eq3.1}
 \varphi_k:=\frac{2\langle x^{k-1}-Tx^{k-1}, u-x^{k-1}\rangle}{\|x^{k-1}-Tx^{k-1}\|^2}+1,\quad k\geq 1,
 \end{equation}
  and the anchor $u$ and the initial guess $x^0$  are chosen in $\mathcal{H}$ such that $\varphi_0:=\frac{2\langle x^0-Tx^0, u-x^0\rangle}{\|x^0-Tx^0\|^2}>-1$.

 \begin{remark}
\rm
Without loss of generality, we always assume that $x^k\neq Tx^k$ for all $k\geq 0$ in the algorithm (\ref{eq3.2})-(\ref{eq3.1}).
Namely, it generates an infinite sequence of iterates $\{x^k\}_{k=0}^\infty$.
\end{remark}

The following lemma provides the fundamental conclusions of the algorithm (\ref{eq3.2})-(\ref{eq3.1}).

\begin{lemma}\label{varphi-k}
Let  $\{x^k\}_{k=0}^\infty$ be generated by the algorithm (\ref{eq3.2})-(\ref{eq3.1}). Then the following properties hold for all $k\geq 1$,
\begin{itemize}
\item[{\rm(i)}] $\varphi_k\geq \varphi_0+k$,
\item[{\rm(ii)}] $\|x^k-Tx^k\|^2\leq\dfrac{2}{\varphi_k}\langle x^k-Tx^k, u-x^k\rangle.$
\end{itemize}
\end{lemma}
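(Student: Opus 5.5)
The plan is to prove (ii) directly for each $k\geq 1$ by the same one-step computation used in the proof of Theorem~\ref{theorem-4.2}, with $k+2$ replaced by $\varphi_k$. Then (i) follows from (ii) by a short induction. The link between the two is the observation that, by the definition \eqref{eq3.1} of $\varphi_{k+1}$, statement (ii) at index $k$ is equivalent to
$$\varphi_{k+1}-1=\frac{2\langle x^k-Tx^k,u-x^k\rangle}{\|x^k-Tx^k\|^2}\geq \varphi_k,$$
that is, to $\varphi_{k+1}\geq\varphi_k+1$. Moreover, $\varphi_1=\varphi_0+1$ holds by definition.

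The steps will be carried out in this order.

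\textbf{Positivity of $\varphi_k$.} Write $a_j:=x^j-Tx^j$ and assume $\varphi_k>0$; this will be justified inductively below. From \eqref{eq3.2} we get
$$x^k-u=\frac{\varphi_k}{\varphi_k+1}(Tx^{k-1}-u).$$
Hence
$$Tx^{k-1}-Tx^k=\frac{1}{\varphi_k}(x^k-u)+a_k$$
and
$$x^k-x^{k-1}=\frac{1}{\varphi_k+1}(u-Tx^{k-1})-a_{k-1}.$$

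\textbf{Expanding both sides of nonexpansiveness.} Expand both sides of $\|Tx^{k-1}-Tx^k\|^2\leq\|x^k-x^{k-1}\|^2$. The pure terms $\frac{1}{\varphi_k^2}\|x^k-u\|^2$ and $\frac{1}{(\varphi_k+1)^2}\|u-Tx^{k-1}\|^2$ coincide, by the first relation above, and cancel. On the right, write $u-Tx^{k-1}=(u-x^{k-1})+a_{k-1}$. The remaining right-hand side then becomes
$$\frac{\varphi_k-1}{\varphi_k+1}\|a_{k-1}\|^2-\frac{2}{\varphi_k+1}\langle a_{k-1},u-x^{k-1}\rangle .$$
By the definition \eqref{eq3.1} of $\varphi_k$ we have $2\langle a_{k-1},u-x^{k-1}\rangle=(\varphi_k-1)\|a_{k-1}\|^2$, so this expression is exactly $0$. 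We are therefore left with
$$\|a_k\|^2+\frac{2}{\varphi_k}\langle x^k-u,a_k\rangle\leq 0,$$
which is (ii). This exact cancellation is the adaptive analogue of the base step \eqref{induction begin}: here it occurs at every $k$, so no induction on the inequality itself is needed.

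\textbf{Induction for (i) and for positivity.} We have $\varphi_1=\varphi_0+1>0$ because $\varphi_0>-1$. If $\varphi_k>0$, then (ii) holds at $k$, and hence $\varphi_{k+1}\geq\varphi_k+1>0$. Thus positivity propagates, and the same induction gives $\varphi_k\geq\varphi_1+(k-1)=\varphi_0+k$. In particular, all the divisions above are legitimate, and the iteration \eqref{eq3.2} is well defined since $\varphi_k+1>0$.

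The only real obstacle is the sign bookkeeping in the expansion of $\|x^k-x^{k-1}\|^2$, where the definition of $\varphi_k$ must be used to make the $a_{k-1}$ terms vanish exactly. I would check that cancellation carefully.
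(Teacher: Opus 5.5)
Your proof is correct and follows essentially the same route as the paper. You use the same expansion of $\|Tx^{k-1}-Tx^k\|^2\le\|x^k-x^{k-1}\|^2$, in which the pure terms cancel and the definition of $\varphi_k$ makes the remaining right-hand side exactly $0$, together with an induction that carries $\varphi_k>0$ and $\varphi_{k+1}\ge\varphi_k+1$ forward (your remark that (ii) at $k$ needs only $\varphi_k>0$ is also implicit in the paper's joint induction, where (ii) at $k-1$ is used only to get (i) at $k$).
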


\begin{proof} We prove the conclusions (i) and (ii) by induction.
Noting $\varphi_0:=\frac{2\langle x^0-Tx^0, u-x^0\rangle}{\|x^0-Tx^0\|^2}>-1$, we get from (\ref{eq3.1}) that $\varphi_1=\varphi_0+1>0$ and this means that (i) holds for $k=1$.
From (\ref{eq3.2}), we derive that
\begin{equation}\label{eq3.5}
Tx^{0}=\frac{\varphi_1+1}{\varphi_1}x^1-\frac{1}{\varphi_1}u=x^1+\frac{1}{\varphi_1}(x^1-u).
\end{equation}
By the nonexpansiveness of $T$ and (\ref{eq3.5}), we have
\begin{align}\label{eq3.6}
\|x^{0}-x^1\|^2&\geq \|Tx^{0}-Tx^1\|^2=\|(x^1-Tx^1)+\frac{1}{\varphi_1}(x^1-u)\|^2 \nonumber\\
&=\|x^1-Tx^1\|^2+\frac{2}{\varphi_1}\langle x^1-Tx^1, x^1-u \rangle+\frac{1}{\varphi_1^2}\|x^1-u\|^2.
\end{align}
Using (\ref{eq3.2}) again, we obtain
\begin{align}\label{eq3.7}
\|x^{0}-x^1\|^2
&=\|(x^{0}-Tx^{0})-\frac{1}{\varphi_1+1}(u-Tx^{0})\|^2 \nonumber\\
&=\|x^{0}-Tx^{0}\|^2-\frac{2}{\varphi_1+1}\langle x^{0}-Tx^{0}, u-Tx^{0} \rangle  +\frac{1}{(\varphi_1+1)^2}\|u-Tx^{0}\|^2\nonumber\\
&=\|x^{0}-Tx^{0}\|^2-\frac{2}{\varphi_1+1}\langle x^{0}-Tx^{0}, u-Tx^{0} \rangle  +\frac{1}{\varphi_1^2}\|x^1-u\|^2.
\end{align}
Combining \eqref{eq3.6} and \eqref{eq3.7} implies
\begin{equation}\label{eq3.9-1}
\aligned
&\|x^1-Tx^1\|^2+\frac{2}{\varphi_1}\langle x^1-Tx^1, x^1-u \rangle\\
\leq&\|x^{0}-Tx^{0}\|^2-\frac{2}{\varphi_1+1}\langle x^{0}-Tx^{0}, u-Tx^{0} \rangle\\
=&\frac{\varphi_1-1}{\varphi_1+1}\|x^{0}-Tx^{0}\|^2-\frac{2}{\varphi_1+1}\langle x^{0}-Tx^{0}, u-x^{0} \rangle\\
=&0,
\endaligned
\end{equation}
where the second equality comes from  (\ref{eq3.1}).
Consequently,
$$\|x^1-Tx^1\|^2\leq\frac{2}{\varphi_1}\langle x^1-Tx^1, u-x^1 \rangle.$$
So (ii) also holds for $k=1$.

Suppose that (i) and (ii) hold for $k-1$ ($k\geq 2$), that is,
\begin{equation}\label{eq3.3}
\varphi_{k-1}\geq\varphi_0+k-1,
\end{equation}
and
\begin{equation}\label{eq3.4}
\|x^{k-1}-Tx^{k-1}\|^2\leq\frac{2}{\varphi_{k-1}}\langle x^{k-1}-Tx^{k-1},u-x^{k-1}\rangle.
\end{equation}
Then $\varphi_{k-1}\geq \varphi_0+k-1>k-2\ge 0$. By (\ref{eq3.1}),  (\ref{eq3.3}) and (\ref{eq3.4}), we get
\begin{equation*}
\varphi_k=\frac{2\langle x^{k-1}-Tx^{k-1}, u-x^{k-1}\rangle}{\|x^{k-1}-Tx^{k-1}\|^2}+1
\geq \varphi_{k-1}+1\geq \varphi_0+k,
\end{equation*}
that is, (i) holds for $k$.
Similar to the derivation of \eqref{eq3.9-1}, we have
\begin{equation*}
\|x^k-Tx^k\|^2+\frac{2}{\varphi_k}\langle x^k-Tx^k, x^k-u \rangle
\le0,
\end{equation*}
which yields
$$\|x^k-Tx^k\|^2\leq\frac{2}{\varphi_k}\langle x^k-Tx^k, u-x^k \rangle.$$
Hence (ii) also holds for $k$ and  the proof is finished.
\end{proof}

\begin{theorem}\label{theorem-4.1}
Assume that $T:\mathcal{H}\rightarrow \mathcal{H}$ is a nonexpansive mapping such that $\Fix(T)\neq \emptyset$. Let   $ \{x^k\}_{k=0}^\infty$ be a sequence generated by the algorithm (\ref{eq3.2})-(\ref{eq3.1}).
Then we have
\begin{equation}\label{rate-adaptive}
\|x^k-Tx^k\|\leq \frac{2}{\varphi_k+1}\|u-x^*\|,\quad k\geq 1,
\end{equation}
 where $x^*$ is an arbitrary fixed point of $T$. The convergence rate estimate \eqref{rate-adaptive} is tight.
\end{theorem}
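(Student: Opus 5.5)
The plan is to repeat the closing argument of Theorem \ref{theorem-4.2}, with $k+2$ replaced by $\varphi_k$. The engine is Lemma \ref{varphi-k}(ii), which supplies, for every $k\geq 1$,
\begin{equation*}
\varphi_k\|x^k-Tx^k\|^2+2\langle x^k-Tx^k, x^k-u\rangle\leq 0 .
\end{equation*}
By Lemma \ref{varphi-k}(i) and the standing assumption $\varphi_0>-1$, we have $\varphi_k\geq\varphi_0+k>0$. In particular $\varphi_k>-1$, so the identity \eqref{identity} may be applied with $\varphi=\varphi_k$.

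I would set $a=x^k-Tx^k$, $b=x^k-u$ and $c=Tx^k-x^*$, where $x^*\in\Fix(T)$ is arbitrary. Then $a+c=x^k-x^*$ and $a+c-b=u-x^*$.

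By nonexpansiveness and $Tx^*=x^*$, we have $\|c\|^2=\|Tx^k-Tx^*\|^2\leq\|x^k-x^*\|^2=\|a+c\|^2$. Adding this to the inequality from Lemma \ref{varphi-k}(ii) shows that the left-hand side of \eqref{identity} is $\leq 0$. Using \eqref{identity}, and dropping the nonnegative last term $\frac{2}{\varphi_k+1}\|a+c-b-\frac{\varphi_k+1}{2}a\|^2$, gives
\begin{equation*}
\frac{\varphi_k+1}{2}\|x^k-Tx^k\|^2\leq\frac{2}{\varphi_k+1}\|u-x^*\|^2 .
\end{equation*}
This is exactly \eqref{rate-adaptive}. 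All the substantive work is already done in Lemma \ref{varphi-k}, so the estimate itself should present no obstacle.

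The main step requiring care is tightness. I would use Example \ref{example31}, with $T=-I$ and $x^*=0$, and choose $u$ and $x^0$ so that equality holds along a subsequence.

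The natural first try is $x^0=0$ with $u\neq 0$, but then $x^0$ is a fixed point and the iteration is excluded. So I would take $u=2x^0-Tx^0=3x^0$ with $x^0\neq 0$, mirroring Theorem \ref{theorem-4.2}. With $T=-I$ we get $x^0-Tx^0=2x^0$, hence
\begin{equation*}
\varphi_0=\frac{2\langle 2x^0, 2x^0\rangle}{4\|x^0\|^2}=1>-1,
\end{equation*}
so the scheme is admissible.

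In this one-dimensional setting all iterates are multiples of $x^0$, so I would compute $\varphi_k$ and $x^k$ explicitly. Writing $x^k=t_kx^0$, we have $\varphi_k=\frac{2\cdot 2t_k(3-t_k)}{4t_k^2}+1=\frac{3}{t_k}$ and $t_{k}=\frac{3-\varphi_k t_{k-1}}{\varphi_k+1}$. Then $\|x^k-Tx^k\|=2|t_k|\|x^0\|$ and $\frac{2}{\varphi_k+1}\|u\|=\frac{6|t_k|}{3+t_k}\|x^0\|$. These agree when $t_k=0$, or more generally give equality in the limit as $t_k\to0$.

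I would verify directly that equality holds in the inequality of Lemma \ref{varphi-k}(ii) for this linear isometry. The point is that in the chain \eqref{eq3.6}--\eqref{eq3.9-1} the nonexpansiveness step is an equality because $T$ is an isometry. I would also check that the discarded square $\|u-x^*-\frac{\varphi_k+1}{2}(x^k-Tx^k)\|^2$ vanishes or tends to zero relative to the other terms, i.e.\ that $x^k-Tx^k$ is asymptotically aligned with $u-x^*$ at the correct length. This alignment check is where I expect the main difficulty. If exact equality fails for this choice, I would instead show that the ratio of the two sides of \eqref{rate-adaptive} tends to $1$, which still establishes tightness.
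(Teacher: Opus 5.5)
Your derivation of the estimate is the paper's own argument, and it is correct: identity \eqref{identity} with $\varphi=\varphi_k$, $a=x^k-Tx^k$, $b=x^k-u$, $c=Tx^k-x^*$, then Lemma \ref{varphi-k}(ii) plus nonexpansiveness, then discarding the last square. The gap is in the tightness part.

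With $T=-I$ and $u=3x^0$ you have $\varphi_0=\frac{2\langle 2x^0,2x^0\rangle}{4\|x^0\|^2}=2$ (not $1$). Hence $\varphi_1=3$ and
\[
x^1=\tfrac14\,(3x^0)+\tfrac34\,(-x^0)=0=x^*.
\]
So the scheme lands on the fixed point at the first step, which violates the standing assumption $x^k\neq Tx^k$. Then $\varphi_2$ is undefined (division by $\|x^1-Tx^1\|^2=0$). At $k=1$ the bound reads $0\le\frac{2}{4}\|u\|=\frac32\|x^0\|$, which is strict.

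Your apparent agreement ``when $t_k=0$'' comes from an index slip. $\varphi_k$ is computed from $x^{k-1}$, so $\varphi_k=3/t_{k-1}$, and your own recursion then gives $t_k=(3-\varphi_kt_{k-1})/(\varphi_k+1)=0$ at once. The fallback ``ratio tends to $1$'' therefore has no infinite sequence to be evaluated on.

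Changing the data does not rescue $T=-I$. For an isometry, Lemma \ref{varphi-k}(ii) and $\|Tx^k-x^*\|\le\|x^k-x^*\|$ are equalities, so the only slack is the discarded square. That square vanishes iff $(\varphi_k+1)x^k=u$. But the iteration gives $(\varphi_k+1)x^k=u-\varphi_kx^{k-1}$, with $\varphi_k>0$ and $x^{k-1}\neq 0$.

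Explicitly, let $e_1,e_2$ be the standard basis of $\mathbb{R}^2$, and take $x^0=e_1$, $u=\alpha e_1+\beta e_2$ with $\alpha>0$, $\beta\neq0$. Then $\varphi_k=\alpha+k-1$, with $x^k=\frac{\beta}{\varphi_k+1}e_2$ for odd $k$ and $x^k=\frac{\alpha}{\varphi_k+1}e_1$ for even $k$. So the ratio of the two sides of \eqref{rate-adaptive} is $|\beta|/\|u\|$ or $\alpha/\|u\|$, never $1$. Letting $\alpha\to0^+$ only shows, through a family of instances, that the constant $2$ cannot be lowered.

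The paper uses a different operator: Lieder's Example 3.1 with $u=x^0$ (so $\varphi_0=0$ and $\varphi_k=k$), where it states the bound is attained.

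Keep in mind what any candidate example must satisfy. Equality at step $k$ forces three things: equality in Lemma \ref{varphi-k}(ii), $\|Tx^k-x^*\|=\|x^k-x^*\|$, and $x^k-Tx^k=\frac{2}{\varphi_k+1}(u-x^*)$. These in turn give $\varphi_{k+1}=\varphi_k+1$ and $\langle x^{k+1}-x^*,u-x^*\rangle=0$. Equality at step $k+1$, however, would need this inner product to equal $\frac{\|u-x^*\|^2}{\varphi_{k+1}+1}>0$. So you can aim for attainment at a prescribed index, but not at consecutive indices of one run. This is also the only sense in which the paper's ``for all $k\geq1$'' can hold.
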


\begin{proof}
Set $\varphi:=\varphi_k$,
$a:=x^k-Tx^k$, $b:=x^k-u$, and $c:=Tx^k-x^*$, then $a+c=x^k-x^*$, $a+c-b=u-x^*$.  By  (\ref{identity}), it follows  that
\begin{align*}
&\varphi_k \|x^k-Tx^k\|^2+2\langle x^k-Tx^k, x^k-u\rangle+\|Tx^k-x^*\|^2-\|x^k-x^*\|^2 \nonumber\\
&=\frac{\varphi_k+1}{2}\|x^k-Tx^k\|^2-\frac{2}{\varphi_k+1}\|u-x^*\|^2
+\frac{2}{\varphi_k+1}\|u-x^*-\frac{\varphi_k+1}{2}(x^k-Tx^k)\|^2.
\end{align*}
Combining the above equality,   Lemma \ref{varphi-k}(ii) and the nonexpansiveness of $T$, we have
$$\frac{\varphi_k+1}{2}\|x^k-Tx^k\|^2-\frac{2}{\varphi_k+1}\|u-x^*\|^2\leq 0.$$
It is immediately clear that the estimate (\ref{rate-adaptive}) follows.

The estimate (\ref{rate-adaptive}) is tight, which can be shown by Example 3.1 in \cite{Lieder2021}. In fact, by direct calculation,
it is easy to verify that for all $k\geq 1$, $\varphi_k=k$ and (\ref{rate-adaptive}) becomes an equality for this example.
\end{proof}
\begin{remark}
\rm
(i) When $u=x^0$, (\ref{rate-adaptive}) becomes (\ref{rate}), thus the latter is a special case of the former.\\
(ii) By Lemma \ref{lemmaGK} and Lemma \ref{lem25}, following a similar argument as in the proof of  Theorem 3.1 in \cite{He-Xu-Dong}, it is easy to show that under the conditions of Theorem \ref{theorem-4.1}, the iterative sequence generated by the algorithm (\ref{eq3.2})-(\ref{eq3.1}) strongly converges to a fixed point of $T$. The proof is omitted here.
\end{remark}

\section{Some concluding remarks}

In this paper, we provide the tight convergence rate estimates  of general Halpern iterations for both predetermined and adaptive anchoring parameters for the case where the anchor may not coincide with the initial guess. These achievements generalize   existing related results. How to further improve the convergence rate of the Halpern iteration deserves further in-depth exploration.

\vskip 3mm

\noindent
{\bf Conflict of Interest:}
The authors declare that they have no conflict of interest.

\end{document}